\theoremstyle{plain}% default
\newtheorem{thm}{Theorem}[section]
\newtheorem{lem}[thm]{Lemma}
\newtheorem{prop}[thm]{Proposition}
\newtheorem{cor}{Corollary}
\newtheorem{assumption}{Assumption}
\theoremstyle{definition}
\newtheorem{defn}{Definition}[section]
\newtheorem{exmp}{Example}[section]
\theoremstyle{remark}
\newtheorem{rem}{Remark}
\newcommand{\Ham}{\mathcal{H}}
\newcommand{\opt}[1]{\hat{#1}}
\newcommand{\var}[1]{#1_{\alpha}}
\newcommand{\dd}{{\rm\,d}}
\newcommand{\eps}{\varepsilon}
\newcommand{\abs}[1]{\left\vert#1\right\vert}
\begin{document}

\title{Necessary Conditions for Infinite Horizon Optimal Control Problems Revisited}
\author{Anton O. Belyakov\\ Lomonosov Moscow State University}

\maketitle

\begin{abstract}
Necessary optimality conditions in the form of the maximum principle for control problems with infinite time horizon are considered. Both finite and infinite values of objective functional are allowed since the concept of overtaking and weakly overtaking optimality is used. New form of optimality condition is obtained and compared with the transversality conditions usually used in the literature. The examples, where these transversality conditions may fail while the new condition holds are presented. For Ramsey problem of capital accumulation a simple form of necessary optimality conditions is derived, which is also valid in the case of zero discounting.
\end{abstract}

%\begin{keyword}
%\end{keyword}

\section{Introduction}\label{sec:intro}
Optimal control problems with infinite horizon play an important role in economic theory. For instance, in the theory of economic growth, 
%starting from celebrated Ramsey model, 
Pontragin's maximum principle is the workhorse for many researchers. The proof of the maximum principle for infinite time horizon one can find, e.g., in \cite{Halkin1974}. The proved theorem does not include transversality conditions. Moreover it is known, \cite{Halkin1974,Kamihigashi2001,Shell1969}, that usually used forms of transversality conditions:
\begin{equation}\label{eq:tcPSI}
  \lim_{t\rightarrow\infty} \psi(t) = 0,
\end{equation}
\begin{equation}\label{eq:tcXPSI}
  \lim_{t\rightarrow\infty} \langle\opt{x}(t),\psi(t)\rangle = 0,
\end{equation}
can be not necessary, where $\opt{x}$ is the optimal \emph{state variable}, $\psi$ is the corresponding \emph{adjoint variable}, and brackets $\langle \cdot,\cdot\rangle$ denote scalar product
of two vectors. %Section 5 in \cite{Halkin1974} provides example (iii), where condition (\ref{eq:tcPSI}) is violated for optimal control.
%Notice that in one-dimensional case, necessity of condition (\ref{eq:tcXPSI}) requires, can be not necessary, see e.g., .

Transversality condition obtained in \cite{Michel1982}, under  assumptions including that the objective functional takes only finite values, has the form of \emph{Hamiltonian} $\Ham$ converging to zero
\begin{equation}\label{eq:tcM}
  \lim_{t\rightarrow\infty} \Ham(\opt{x}(t),\opt{u}(t),t,\psi(t)) = 0,
\end{equation}
where $\opt{x}$ and $\opt{u}$ are the optimal state trajectory and control. \cite[Lecture III]{Shell1969} proposes, without prove, transversality condition as
\begin{equation}\label{eq:tcShell}
\limsup_{t\rightarrow\infty} \langle\psi(t),x(t)\rangle \geq \liminf_{t\rightarrow\infty} \langle\psi(t),\opt{x}(t)\rangle ,
\end{equation}
where $x$ is any admissible path of the state variable.
\cite[footnote 4 for Lecture III]{Shell1969} says that ``this conjecture is related to a conjecture made by Kenneth J. Arrow in private correspondence.'' Notice that Arrow's sufficiency theorem contains condition that follows from (\ref{eq:tcShell}), which one can expect for problems, where the maximum principle provides both necessary and sufficient conditions of optimality.

In \cite{AseevBesovKryazhimskii2012,AseevKryazhimskii2004,AseevVeliov2012,AseevVeliovNeedle} the authors determine the adjoint variable uniquely by a Cauchy-type formula, that solves adjoint equation with transversality conditions in the following form
\begin{equation}\label{eq:tcKAV}
  \lim_{t\rightarrow\infty} Y(t)\,\psi(t) = 0,
\end{equation}
where $Y(t)$ is the fundamental matrix of the state equation linearized about the optimal solution, see eg. \cite{Khlopin2013, Khlopin2015}.

Due to their limitations all aforementioned transversality conditions fail to select the optimal solution of Ramsey problem without discounting, \cite{Ramsey1928}, where we consider diverging objective functional. Condition (\ref{eq:tcM}) is proved only for converging functionals, but it can hold if we modify the objective improper integral subtracting from its integrand the constant, such that for optimal solution the integral converges, see e.g., \cite[Section 7]{Cass1965}. Condition (\ref{eq:tcShell}) fails because $x$ is any admissible path of the state variable, it could hold if $x$ have been requited to satisfy the maximum principle.  Condition (\ref{eq:tcKAV}) fails because it is proved for problems, where optimal trajectory is in the interior of the domain of admissible trajectories, which is not the case for Ramsey problem.%, where a slight overconsumption lead capital out of the positive domain in finite time. 
%This is probably why (\ref{eq:tcPSI}) and (\ref{eq:tcXPSI}) also fail in Ramsey problem without discounting.
 
Ramsey problem without discounting can be solved with the necessary conditions obtained in this paper. In contrast to  (\ref{eq:tcPSI})--(\ref{eq:tcKAV}) new conditions do not contain explicitly the adjoint variable. The paper in hand considers another example, where some of the conditions (\ref{eq:tcPSI})--(\ref{eq:tcKAV})  do not hold, while the new conditions are valid. The proved conditions include condition (\ref{eq:tcKAV}) as a special case and extend its domain of applicability. 

\section{Statement of the problem}
\label{sec:statement}

Let $X$ be a nonempty open
convex subset of $R^n$, $U$ be an arbitrary nonempty set in
$R^m$. Let us consider the following optimal
control problem:
\begin{align}
% \nonumber to remove numbering (before each equation)
  & \int_{t_0}^{\infty}g(x(t),u(t),t)\dd t  \label{eq:J}
\rightarrow\max_{u},\\
  & \dot x(t) = f(x(t),u(t),t),\quad x(t_0) = x_0, \label{eq:dx}
\end{align}
where $u(t)\in U$ and exists state variable $x(t)\in X$ for all $t \in [t_0,+\infty)$.  We call such control $u(\cdot)$ and state variable $x(\cdot)$ trajectories \emph{admissible}. 
Functions $f$ and $g$ are differentiable w.r.t. their first argument, $x$, and together with these partial derivatives are defined and
locally bounded, measurable in $t$ for every $(x, u)\in X \times U$, and continuous in $(x, u)$
for almost every $t \in [0,\infty)$.

Improper integral in (\ref{eq:J}) might not converge for any candidate for optimal control $\opt{u}(\cdot)$, i.e. the limit
\begin{equation}\label{eq:Jinfty}
    \lim_{T\rightarrow\infty} J(\opt{u}(\cdot),x_0,t_0,T),
\end{equation}
might fail to exist, or might be infinite, where we introduce the finite time horizon functional:
\begin{equation}\label{eq:JT}
    J(u(\cdot),x_0,t_0,T) = \int_{t_0}^{T}g(x(t),u(t),t)\dd t,
\end{equation}
subject to state equation (\ref{eq:dx}). Thus functional $J$  may be unbounded or oscillating as $T\rightarrow\infty$. So we consider more general   definitions of optimality.
\begin{defn}\label{def:LOO} An admissible control $\opt{u}(\cdot)$ is 
\emph{overtaking optimal} (OO) if for every admissible control $u(\cdot)$ and every scalar $\eps > 0$ there exists time $T=T(\eps,u(\cdot)) > t_0$ 
such that for all $T' \geq T$
\begin{equation}\label{eq:LOO} 
  J(u(\cdot),x_0,t_0,T') - J(\opt{u}(\cdot),x_0,t_0,T') \leq \eps.
\end{equation}
\end{defn}
\begin{defn}\label{def:LWOO} An admissible control $\opt{u}(\cdot)$ is weakly
overtaking optimal (WOO) if for every admissible control $u(\cdot)$, scalar $\eps > 0$, and time $T > t_0$ 
one can find $T' = T'(\eps,T,u(\cdot)) \geq T$ such that 
\begin{equation}\label{eq:LWOO} 
  J(u(\cdot),x_0,t_0,T') - J(\opt{u}(\cdot),x_0,t_0,T') \leq \eps.
\end{equation}
\end{defn}

These two definitions imply that  for all admissible controls $u(\cdot)$ 
$$\limsup_{T\rightarrow\infty} \left(J(u(\cdot),x_0,t_0,T) - J(\opt{u}(\cdot),x_0,t_0,T)\right) \leq 0,$$
for OO $\opt{u}(\cdot)$ in Definition~\ref{def:LOO}, and
$$\liminf_{T\rightarrow\infty} \left(J(u(\cdot),x_0,t_0,T) - J(\opt{u}(\cdot),x_0,t_0,T)\right) \leq 0,$$
for WOO $\opt{u}(\cdot)$ in Definition~\ref{def:LWOO}. It is clear that if $\opt{u}(\cdot)$ is OO, then it is also WOO. When usual optimality holds, i.e. finite limit exists  in (\ref{eq:Jinfty}) and for all admissible controls $u(\cdot)$ 
$$\limsup_{T\rightarrow\infty} J(u(\cdot),x_0,t_0,T)  \leq \lim_{T\rightarrow\infty} J(\opt{u}(\cdot),x_0,t_0,T),$$
then $\opt{u}(\cdot)$ is also both OO and WOO.

There are many definitions of optimality in the literature, for example, corresponding uniform optimalities, when $T$ and $T'$ do not depend on $u(\cdot)$ in Definitions \ref{def:LOO} and \ref{def:LWOO}, see e.g., \cite{Khlopin2013}. These definitions are stronger and may lead to absence of corresponding optimal solutions. WOO seems to be one of the weakest concepts for which optimality conditions can be proved in the form of Pontryagin's maximum principle. 

\section{Optimality conditions}
\label{sec:conditions}

 With the use of the adjoint
variable $\psi$ we introduce \emph{Hamiltonian}
\begin{equation}\label{eq:H}
    \Ham(x,u,t,\psi,\lambda) = \lambda\, g(x,u,t) + \langle \psi, f(x,u,t)\rangle
\end{equation}
where brackets $\langle \cdot,\cdot\rangle$ denote scalar product
of two vectors. It is known, see \cite{Halkin1974,Pontryagin1961,Shell1969}, that there exist scalar $\lambda\geq 0$ and vector $\psi_0$, such that $(\lambda,\psi_0)\neq 0$ and the
maximum principle holds:
\begin{equation}\label{eq:maxH}
    \Ham(\opt{x}(t),\opt{u}(t),t,\psi(t),\lambda) = \max_{u \in U}\Ham(\opt{x}(t),u,t,\psi(t),\lambda),
\end{equation}
along with the adjoint equation:
\begin{equation}\label{eq:adjH}
    -\dot\psi(t) = \frac{\partial\Ham}{\partial
    x}(\opt{x}(t),\opt{u}(t),t,\psi(t),\lambda), \quad \psi(t_0) = \psi_0.
\end{equation}
In order to select such $\lambda$ and $\psi_0$ with the use of transversality condition we consider the linearization of system
(\ref{eq:dx})
\begin{equation}
  \dot y(t) = \left(\frac{\partial f}{\partial
    x}(\opt{x}(t),\opt{u}(t),t)\right)\,y(t).\label{eq:dy}
\end{equation}
Its solution, for the given initial condition $y(\tau)$, can be
written with the use of the \emph{state-transition matrix}
$K(t,\tau)$:
\begin{equation}\label{eq:yK}
    y(t) = K(t,\tau)\,y(\tau).
\end{equation}

The following assumption determines how well linearized system
(\ref{eq:dy}) should approximate the original
system, so that the proposed
optimality condition holds along with the maximum principle.
\begin{assumption}\label{a:lim0}
For almost all time instances $\tau \geq
t_0$ directional derivative $\langle J_x, \zeta\rangle$ of the functional at the optimal trajectory $\opt{x}$ uniformly bounds from below the finite difference approximation of the directional derivative:
\begin{align*}
   \lim_{\alpha\rightarrow 0} \liminf_{T\rightarrow\infty} \Bigg(& \frac{J(\opt{u}(\cdot),\opt{x}(\tau)+\alpha \zeta,\tau,T)-J(\opt{u}(\cdot),\opt{x}(\tau),\tau,T)}{\alpha} - \langle \opt{J}_x(\tau,T),\zeta\rangle\Bigg) \geq 0,
\end{align*}
only with such perturbations of the initial
conditions, $x(\tau)=\hat{x}(\tau)+\alpha \zeta$, that result in admissible trajectories, i.e. $x(t)\in X$ in $[\tau, \infty)$,
where we denote
\begin{equation}\label{eq:Jx}
\opt{J}_x(\tau,T) := \int\limits_{\tau}^{T}K^*(t,\tau)\,\frac{\partial g}{\partial
    x}(\opt{x}(t),\opt{u}(t),t) \dd t .
\end{equation}
\end{assumption}

\begin{rem}\label{r:lim0} If function $g$ does not depend on $x$,
then Assumption~\ref{a:lim0} trivially holds.
If functions $f$ and $g$ are linear w.r.t. $x$,
then Assumption~\ref{a:lim0} holds as equality for any
scalar $\alpha > 0$ and direction $\zeta$. \footnote{Due to inequality in Assumption~\ref{a:lim0}, 
it is also satisfied when $g$ is convex and $f$ is linear w.r.t. $x$.} 
\end{rem}

 The next assumption is needed for admissibility of optimal trajectory variations in all directions. 
 \begin{assumption}\label{a:x2}
There exists a number $\beta(\tau) > 0$
such that for all $x(\tau)\in X$ satisfying the inequality $\abs{x(\tau)-\opt{x}(\tau)} < \beta(\tau)$, the initial value
problem (\ref{eq:dx})
%$$\dot x(t) = f(x(t),u(t),t),\quad x(t_0) = x_0,$$ 
with $u = \opt{u}$ and the initial condition $x(t_0) = x(\tau)$ at $t_0=\tau$ has a
solution $x(t)\in X$ for all $t\geq\tau$.
\end{assumption}

\begin{prop}[Necessary optimality condition] 
\label{p:limU}
Let Assumptions \ref{a:lim0} and \ref{a:x2} 
be fulfilled for an admissible pair $(\hat{u}(\cdot),\hat{x}(\cdot))$ and the following limit finite valued
\begin{equation}\label{eq:MJx}% Is it possible to use the existence of  \limsup_{T\rightarrow\infty}\abs{\opt{J}_x(\tau,T)} instead? 
\limsup_{T\rightarrow \infty}\abs{\opt{J}_x(\tau,T)} < +\infty.
\end{equation}
1) If control $\opt{u}$ is WOO, then for all $\tau\in[t_0,\infty)$ and $u\in U$
\begin{align}\label{eq:limUwoo}
&\liminf_{T\rightarrow \infty}\left( \Ham(\opt{x}(\tau),u,\tau,\opt{J}_x(\tau,T),1) - \Ham(\opt{x}(\tau),\opt{u}(\tau),\tau,\opt{J}_x(\tau,T),1)\right) \leq 0,
\end{align}
2) If control $\opt{u}$ is OO, then for all $\tau\in[t_0,\infty)$ and $u\in U$
\begin{align}\label{eq:limUoo}
&\limsup_{T\rightarrow \infty}\left( \Ham(\opt{x}(\tau),u,\tau,\opt{J}_x(\tau,T),1) - \Ham(\opt{x}(\tau),\opt{u}(\tau),\tau,\opt{J}_x(\tau,T),1)\right) \leq 0.
\end{align}
\end{prop}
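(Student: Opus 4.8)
The plan is to reduce both overtaking-type conditions to a statement about the behaviour in $T$ of the finite-horizon increment $J(\var{u}(\cdot),x_0,t_0,T)-J(\opt{u}(\cdot),x_0,t_0,T)$ generated by a needle (spike) variation of $\opt{u}$ at the instant $\tau$, and then to read off the Hamiltonian difference from the first-order term of that increment. First I translate Definitions \ref{def:LWOO} and \ref{def:LOO} into limit inequalities. If $\opt{u}$ is WOO, then for any admissible $\var{u}$ whose trajectory remains in $X$ the defining property produces, for each $\eps>0$, arbitrarily large $T'$ with $J(\var{u},x_0,t_0,T')-J(\opt{u},x_0,t_0,T')\le\eps$; letting $\eps\downarrow 0$ this is exactly $\liminf_{T\to\infty}(J(\var{u},x_0,t_0,T)-J(\opt{u},x_0,t_0,T))\le 0$. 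In the same way OO yields the corresponding statement with $\limsup$. It therefore suffices, for each fixed $u\in U$, to produce a one-parameter family $\var{u}$ of feasible controls whose increment is bounded below, \emph{uniformly in $T$}, by $\alpha$ times the Hamiltonian difference appearing in \eqref{eq:limUwoo}.

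Next I fix $u\in U$ together with a time $\tau$ that is simultaneously a Lebesgue point of $t\mapsto f(\opt{x}(t),\opt{u}(t),t)$ and of $t\mapsto g(\opt{x}(t),\opt{u}(t),t)$ and at which Assumption \ref{a:lim0} holds (a set of full measure), and I let $\var{u}$ be the constant $u$ on $[\tau,\tau+\alpha]$ and $\opt{u}$ elsewhere. A standard needle estimate gives the end-of-needle perturbation $x_\alpha(\tau+\alpha)-\opt{x}(\tau+\alpha)=\alpha\,\delta+o(\alpha)$ with $\delta:=f(\opt{x}(\tau),u,\tau)-f(\opt{x}(\tau),\opt{u}(\tau),\tau)$, and identifies the direct running-cost contribution over $[\tau,\tau+\alpha]$ as $\alpha\,(g(\opt{x}(\tau),u,\tau)-g(\opt{x}(\tau),\opt{u}(\tau),\tau))+o(\alpha)$. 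The remaining increment over $[\tau+\alpha,T]$ is the increment produced by propagating the initial-state perturbation $\alpha\,\delta$ along the control $\opt{u}$; since the state-transition matrix $K$ of the linearisation is precisely the object entering \eqref{eq:Jx}, its linear part is $\alpha\langle\opt{J}_x(\tau,T),\delta\rangle$. Collecting the two contributions, the increment equals $\alpha\big(\Ham(\opt{x}(\tau),u,\tau,\opt{J}_x(\tau,T),1)-\Ham(\opt{x}(\tau),\opt{u}(\tau),\tau,\opt{J}_x(\tau,T),1)\big)$ to first order, which is exactly the quantity in \eqref{eq:limUwoo} and \eqref{eq:limUoo} with normal multiplier $\lambda=1$.

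The decisive point is that this first-order identity must come with an error that is $o(\alpha)$ \emph{uniformly in $T\in[\tau,\infty)$}, since otherwise the interchange of $\lim_{\alpha\to 0}$ with $\liminf_T$ / $\limsup_T$ is not licensed. This is exactly what Assumption \ref{a:lim0} supplies: read with $\zeta=\delta$, it states that for every $\eta>0$ there is $\bar\alpha>0$ so that for $0<\alpha<\bar\alpha$ the tail increment dominates its linear approximation $\alpha\langle\opt{J}_x(\tau,T),\delta\rangle$ up to $-\alpha\eta$ simultaneously for all $T$, while the bound \eqref{eq:MJx}, $\abs{\opt{J}_x(\tau,T)}\le M(\tau)$, keeps the neglected second-order propagation terms $o(\alpha)$ uniformly in $T$. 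Combining these, I obtain, uniformly in $T\ge\tau$, the estimate $J(\var{u},x_0,t_0,T)-J(\opt{u},x_0,t_0,T)\ge \alpha\big(\Ham(\opt{x}(\tau),u,\tau,\opt{J}_x(\tau,T),1)-\Ham(\opt{x}(\tau),\opt{u}(\tau),\tau,\opt{J}_x(\tau,T),1)\big)-\alpha\eta$.

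Finally I take $\liminf_T$ (respectively $\limsup_T$) across this uniform inequality. Since $\alpha$, $\eta$ and the Hamiltonian difference are independent of $T$, the WOO reduction of the first paragraph forces $\liminf_T$ of the Hamiltonian difference to be $\le\eta$, and $\eta\downarrow 0$ gives \eqref{eq:limUwoo}; the OO reduction forces $\limsup_T$ of it to be $\le\eta\downarrow 0$, giving \eqref{eq:limUoo}. I expect the genuine obstacle to be precisely this uniform-in-$T$ control of the variational remainder, including the verification that $\var{u}$ stays feasible ($x_\alpha(t)\in X$ for all $t$, which is why Assumption \ref{a:lim0} is restricted to feasible perturbations) and that the spike trajectory and the pure initial-state perturbation generate tails differing by $o(\alpha)$ uniformly in $T$; the finite-horizon needle bookkeeping on $[\tau,\tau+\alpha]$ and the passage from Definitions \ref{def:LWOO} and \ref{def:LOO} to the limit inequalities are routine.
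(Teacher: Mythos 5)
Your proposal is correct and follows essentially the same route as the paper's own proof: a needle variation at $\tau$, decomposition of the increment into the spike contribution plus the tail generated by the state perturbation, Assumption~\ref{a:lim0} to get the linear lower bound uniformly in $T$, the bound \eqref{eq:MJx} to replace the actual perturbation direction by its limit $f(\opt{x}(\tau),u,\tau)-f(\opt{x}(\tau),\opt{u}(\tau),\tau)$ uniformly in $T$, and then the WOO/OO definitions to pass to $\liminf$/$\limsup$. The only cosmetic difference is that the paper places the needle on $(\tau-\alpha,\tau]$ so that Assumption~\ref{a:lim0} applies exactly at $\tau$, whereas your interval $[\tau,\tau+\alpha]$ would require applying it at $\tau+\alpha$ and letting $\alpha\to 0$; this changes nothing essential.
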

\begin{proof}See Appendix \ref{ap:limU}. \end{proof}
Example~\ref{ex:oscil} demonstrate the application of this new optimality condition, when $\opt{J}_x(\tau,T)$ oscillating in $T$. 

Under additional assumption, that $\opt{J}_x(\tau,T)$ converges as $T\rightarrow\infty$, referred to as condition of dominating discount  in general form, see \cite{AseevVeliovNeedle}, the following corollary proves that maximum principle holds in normal case ($\lambda = 1$) and provides an explicit expression for the adjoint variable, which is equivalent to transversality condition (\ref{eq:tcKAV}). 
\begin{cor}[Dominating discounting]\label{r:lim0}
If control $\opt{u}$ is WOO, Assumptions~\ref{a:lim0} and \ref{a:x2}  hold, and the following limit exists 
\begin{align} \opt{\psi}(\tau) & := 
\lim_{T\rightarrow\infty} \opt{J}_x(\tau,T)  = \int\limits_{\tau}^{\infty}K^*(t,\tau)\,\frac{\partial g}{\partial
    x}(\opt{x}(t),\opt{u}(t),t) \dd t, \label{eq:psi0}
\end{align}
see (\ref{eq:Jx}),
then $\opt{\psi}$ solves adjoint system
(\ref{eq:adjH}) in the normal case ($\lambda = 1$) and the maximum principle holds: 
\begin{equation}\label{eq:maxH0}
    \Ham(\opt{x}(t),\opt{u}(t),t,\opt{\psi}(t),1) \leq \Ham(\opt{x}(t),u,t,\opt{\psi}(t),1),\quad \text{for all } u \in U.
\end{equation}
\end{cor}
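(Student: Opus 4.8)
The plan is to establish the two assertions separately: the maximum principle (\ref{eq:maxH0}) will follow almost immediately from part~1 of Proposition~\ref{p:limU}, while the fact that $\opt{\psi}$ solves the adjoint system (\ref{eq:adjH}) will be obtained by differentiating the Cauchy-type formula (\ref{eq:psi0}) in its lower limit $\tau$.

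For the maximum principle I would start from inequality (\ref{eq:limUwoo}), which is available because $\opt{u}$ is WOO and Assumption~\ref{a:lim0} holds. The decisive observation is that the Hamiltonian (\ref{eq:H}) is affine in its adjoint argument, so that for fixed $\tau$ and $u$ the difference $\Ham(\opt{x}(\tau),u,\tau,\opt{J}_x(\tau,T),1) - \Ham(\opt{x}(\tau),\opt{u}(\tau),\tau,\opt{J}_x(\tau,T),1)$ equals $[g(\opt{x}(\tau),u,\tau)-g(\opt{x}(\tau),\opt{u}(\tau),\tau)] + \langle \opt{J}_x(\tau,T),\, f(\opt{x}(\tau),u,\tau)-f(\opt{x}(\tau),\opt{u}(\tau),\tau)\rangle$ and is therefore continuous, indeed affine, in $\opt{J}_x(\tau,T)$. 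By hypothesis the limit $\opt{J}_x(\tau,T)\to\opt{\psi}(\tau)$ exists as $T\to\infty$, so this difference converges, its $\liminf$ coincides with its limit, and passing to the limit in (\ref{eq:limUwoo}) turns the inequality into the pointwise maximum principle (\ref{eq:maxH0}).

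For the adjoint equation I would differentiate (\ref{eq:psi0}) with respect to $\tau$. This requires the derivative of the state-transition matrix in its second argument. From the defining relations (\ref{eq:dy})--(\ref{eq:yK}) the matrix $K$ satisfies $\partial_t K(t,\tau) = (\partial f/\partial x)(\opt{x}(t),\opt{u}(t),t)\,K(t,\tau)$ with $K(\tau,\tau)=I$, together with the composition law $K(t,\tau)K(\tau,s)=K(t,s)$; differentiating this identity in $\tau$ and using the equation for $\partial_t K$ yields the adjoint relation $\partial_\tau K(t,\tau) = -K(t,\tau)(\partial f/\partial x)(\opt{x}(\tau),\opt{u}(\tau),\tau)$, and hence $\partial_\tau K^*(t,\tau) = -\left((\partial f/\partial x)(\opt{x}(\tau),\opt{u}(\tau),\tau)\right)^* K^*(t,\tau)$. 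Applying the Leibniz rule to (\ref{eq:psi0}), the boundary term contributes $-K^*(\tau,\tau)(\partial g/\partial x)(\tau) = -(\partial g/\partial x)(\tau)$, while the $\tau$-derivative of the integrand factors out $-\left((\partial f/\partial x)(\tau)\right)^*$ and reproduces $\opt{\psi}(\tau)$ itself; collecting terms gives $-\dot{\opt{\psi}}(\tau) = (\partial g/\partial x)(\tau) + \left((\partial f/\partial x)(\tau)\right)^*\opt{\psi}(\tau)$, which is precisely $\partial\Ham/\partial x$ evaluated along the optimal solution at $\lambda=1$, that is (\ref{eq:adjH}).

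The step I expect to be the genuine obstacle is the rigorous justification of differentiating the improper integral (\ref{eq:psi0}): one must show that $\opt{\psi}$ is absolutely continuous in $\tau$ and that the Leibniz rule applies to an integral over the unbounded interval $[\tau,\infty)$. This is where the uniform bound (\ref{eq:MJx}) and the assumed convergence of $\opt{J}_x(\tau,T)$ do the work, since they control the tail $\int_T^\infty K^*(t,\tau)(\partial g/\partial x)(t)\dd t$ uniformly enough to interchange differentiation and integration and thereby legitimize the formal computation above. By comparison, the maximum-principle part is essentially a continuity argument once Proposition~\ref{p:limU} is in hand and is not expected to cause difficulty.
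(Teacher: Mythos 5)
Your proposal is correct and follows essentially the same route as the paper's (very terse) proof: pass to the limit $T\to\infty$ in condition (\ref{eq:limUwoo}) to obtain the pointwise maximum principle, and differentiate the integral expression (\ref{eq:psi0}) with respect to $\tau$ to verify the adjoint equation. The only difference is that the paper additionally points to Proposition~\ref{p:lima} with $a_0=0$, $\lambda=1$ as an alternative that sidesteps the differentiation-under-the-improper-integral issue you flag (which can also be handled by splitting the integral at a finite $T$ via $\opt{\psi}(\tau)=\opt{J}_x(\tau,T)+K^*(T,\tau)\,\opt{\psi}(T)$ and differentiating only finite quantities).
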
%
\begin{proof}
Conditions (\ref{eq:limUwoo}) and (\ref{eq:limUoo}) take the form of maximum principle (\ref{eq:maxH0}). Differentiation w.r.t. $\tau$ of the integral expression for vector-function 
$\opt{\psi}$ in (\ref{eq:psi0}) shows that this is a solution of the adjoint system
(\ref{eq:adjH}) in the normal case. Alternatively, see Proposition~\ref{p:lima}  for $a_0 = 0$ and $\lambda = 1$.
\end{proof}
The following proposition, without Assumption~\ref{a:x2}, allows not only for normal case ($\lambda = 1$), but also for abnormal one, when $\lambda = 0$, see Example~\ref{ex:lima}.

\begin{prop}[Special transversality conditions] \label{p:lima}
If control $\opt{u}$ is WOO, Assumption~\ref{a:lim0} holds, and the following limit exists 
\begin{equation}\label{eq:lima}
\lim_{T\rightarrow \infty}K^*(T,t_0)\,\psi(T) = a_0,
\end{equation}
where $\psi(\cdot)$ is the solution of adjoint equation (\ref{eq:adjH}) such that maximum condition (\ref{eq:maxH}) is fulfilled, and the limit in (\ref{eq:psi0}) exists		, then $$\psi(\tau) = K^*(t_0,\tau)\,a_0 + \lambda\opt{\psi}(\tau).$$
\end{prop}
\begin{proof}See Appendix \ref{ap:lima}. \end{proof}
The similar expression was obtained in \cite[Section~6]{AseevBesovKryazhimskii2012} under certain assumptions ensuring the existence of the limit in (\ref{eq:psi0}). It was proved that vector $a_0$ has nonnegative components when the problem is autonomous and monotonic in the state variable, i.e. $\frac{\partial g}{\partial
    x}(x,u) > 0$, $\frac{\partial f}{\partial
    x}(x,u) > 0$ for all $(x,u)\in X\times U$ and for all optimal trajectories $\opt{x}$ there exist $\tau\geq t_0$ and vector $u_{\tau}$ such that $f(\opt{x}(t),u_{\tau}) > 0$.
In \cite{Khlopin2015} was noted the statement onf the following corollary.
\begin{cor}\label{c:lima} 
The limit in (\ref{eq:psi0}) exists if, and only if, (\ref{eq:tcKAV}) holds.
\end{cor}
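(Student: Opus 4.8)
The plan is to reduce condition (\ref{eq:tcKAV}) to the hypothesis (\ref{eq:lima}) of Proposition~\ref{p:lima} and then read off both implications from a single computational identity. That identity relates the quantity in (\ref{eq:tcKAV}) to the partial integral $\opt{J}_x(t_0,\cdot)$ whose limit defines (\ref{eq:psi0}). Concretely, I would differentiate $t\mapsto K^*(t,t_0)\,\psi(t)$ along any solution $\psi$ of the adjoint equation (\ref{eq:adjH}). Using $\dot K(t,t_0)=\frac{\partial f}{\partial x}(\opt{x}(t),\opt{u}(t),t)\,K(t,t_0)$ together with (\ref{eq:adjH}) and (\ref{eq:H}), the two terms carrying $(\frac{\partial f}{\partial x})^*$ cancel and one is left with
\[
\frac{\dd}{\dd t}\bigl(K^*(t,t_0)\,\psi(t)\bigr) = -\lambda\,K^*(t,t_0)\,\frac{\partial g}{\partial x}(\opt{x}(t),\opt{u}(t),t).
\]
Integrating from $t_0$ to $T$ and recalling (\ref{eq:Jx}) gives $K^*(T,t_0)\,\psi(T)=\psi_0-\lambda\,\opt{J}_x(t_0,T)$. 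Hence $K^*(T,t_0)\,\psi(T)$ and $\opt{J}_x(t_0,T)$ differ only by the constant $\psi_0$ and the factor $-\lambda$, so one converges as $T\to\infty$ exactly when the other does.

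Next I would identify the fundamental matrix $Y$ in (\ref{eq:tcKAV}) with $K^*(\cdot,t_0)$, so that (\ref{eq:tcKAV}) is precisely condition (\ref{eq:lima}) with $a_0=0$. The forward implication is then immediate: if (\ref{eq:tcKAV}) holds, the limit (\ref{eq:lima}) exists with value $0$, and Proposition~\ref{p:lima} yields the existence of the limit in (\ref{eq:psi0}).

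For the converse I would invoke the canonical adjoint. Assuming the limit in (\ref{eq:psi0}) exists, $\opt{\psi}$ is well defined and, by Corollary~\ref{r:lim0}, it is an admissible adjoint in the normal case ($\lambda=1$) with $\opt{\psi}(t_0)=\lim_{T\to\infty}\opt{J}_x(t_0,T)$. Substituting $\psi=\opt{\psi}$, $\lambda=1$, $\psi_0=\opt{\psi}(t_0)$ into the displayed identity gives $K^*(T,t_0)\,\opt{\psi}(T)=\opt{\psi}(t_0)-\opt{J}_x(t_0,T)$, whose right-hand side tends to $0$ as $T\to\infty$; this is exactly (\ref{eq:tcKAV}). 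If it is necessary to move from the base point $t_0$ to an arbitrary $\tau$, I would use the semigroup relation $K(t,\tau)=K(t,t_0)\,K(t_0,\tau)$, which factorizes $\opt{J}_x(\tau,T)=K^*(t_0,\tau)\bigl(\opt{J}_x(t_0,T)-\opt{J}_x(t_0,\tau)\bigr)$ and shows that convergence of the defining integral at one base point is equivalent to convergence at every $\tau$.

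The main obstacle I anticipate is bookkeeping rather than depth. First, the adjoint/transpose conventions must be kept consistent so that the object in (\ref{eq:tcKAV}) really is $K^*(t,t_0)\,\psi(t)$ and not its transpose. Second, and more substantively, one must pin down that the value $0$ in (\ref{eq:tcKAV}) corresponds to the distinguished adjoint $\opt{\psi}$ (equivalently, to $a_0=0$): for a generic adjoint $\psi$ the limit $\psi_0-\lambda\,\opt{\psi}(t_0)$ need not vanish, so the equivalence closes only once (\ref{eq:tcKAV}) is read for this canonical choice, which is exactly the adjoint produced in Corollary~\ref{r:lim0}.
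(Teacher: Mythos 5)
Your proposal is correct and follows essentially the same route as the paper: the displayed identity $K^*(T,t_0)\,\psi(T)=\psi_0-\lambda\,\opt{J}_x(t_0,T)$ is exactly Lemma~\ref{l:psiT} evaluated at $\tau=t_0$, and both directions are then read off from Proposition~\ref{p:lima} with $a_0=0$ together with the identification $Y(T)=K(T,t_0)$, just as in the paper's one-line proof. Your version is merely more explicit (it spells out the converse direction and flags the $K$ versus $K^*$ bookkeeping, which the paper glosses over), but no new idea is involved.
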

\begin{proof} Taking $a_0=0$ in Proposition~\ref{p:lima}, we have
\begin{equation}\label{eq:lim0}
\lim_{T\rightarrow \infty}K^*(T,t_0)\,\opt{\psi}(T) = \lim_{T\rightarrow \infty}Y(T)\,\opt{\psi}(T) = 0,
\end{equation}
where the state transition matrix $K(T,\tau) = Y(T)\, Y(\tau)^{-1}$ is expressed via the non-degenerate fundamental matrix $Y$, such that $Y(t_0) = I$.
 \end{proof}
 Notice that, in contrast to $\psi(\cdot)$, the solution $\opt{\psi}(\cdot)$ of the adjoint equation is not necessarily a correct adjoint variable, such that maximum condition is fulfilled for an optimal control, since we do not require Assumption~\ref{a:x2} in Proposition~\ref{p:lima}. Assumption~\ref{a:x2} is indeed violated in some classical models such as  Ramsey model, see Example \ref{ex:Ramsey}.
 %, where only Assumption~\ref{a:lim0} holds and $\opt{\psi} \equiv \opt{J}_x \equiv 0$, because integrand $g$ does not depend on state variable $x$. 
 
 In order to find optimal control without Assumption~\ref{a:x2}, we can formulate similar necessary optimality conditions to (\ref{eq:limUwoo})--(\ref{eq:limUoo}) when state variable $x$ is one-dimensional ($n=1$) and state equation (\ref{eq:dx}) does not depend explicitly on time. Consider set $G\subset U\times X$ of all admissible pairs $(u(\cdot),x(\cdot))$ satisfying maximum condition (\ref{eq:maxH}) with adjoint equation (\ref{eq:adjH}). % and not violating at any time $\tau\in[t_0,\infty)$ any constraints imposed. 
 Then, in the following Proposition we take the set $\opt{U}(x) = \{u\,:\,(u,x)\in G \} \subset U$ instead of $U$. This means that we find \emph{synthesis of control} for each $\psi_0$ in (\ref{eq:adjH}) which results in an admissible control.
 
\begin{prop}[Special necessary optimality condition for autonomous one-dimensional state equation] 
\label{p:limU1}
Let one-dimensional state variable $x$ be governed by an autonomous equation, Assumption \ref{a:lim0}
 fulfilled for an admissible pair $(\hat{u}(\cdot),\hat{x}(\cdot))$, and  limit (\ref{eq:MJx}) finite valued.\\[5pt]
1) If control $\opt{u}$ is WOO, then for all $\tau\in[t_0,\infty)$ and $u\in \opt{U}(\opt{x}(\tau))$ 
% such that $(u,\opt{x}(\tau))\in G$
\begin{align}\label{eq:limUwoo1}
&\liminf_{T\rightarrow \infty}\left( \Ham(\opt{x}(\tau),u,\tau,\opt{J}_x(\tau,T),1) - \Ham(\opt{x}(\tau),\opt{u}(\tau),\tau,\opt{J}_x(\tau,T),1)\right) \leq 0,
\end{align}
2) If control $\opt{u}$ is OO, then for all $\tau\in[t_0,\infty)$ and $u\in \opt{U}(\opt{x}(\tau))$
%$u$ such that $(u,\opt{x}(\tau))\in G$
\begin{align}\label{eq:limUoo1}
&\limsup_{T\rightarrow \infty}\left( \Ham(\opt{x}(\tau),u,\tau,\opt{J}_x(\tau,T),1) - \Ham(\opt{x}(\tau),\opt{u}(\tau),\tau,\opt{J}_x(\tau,T),1)\right) \leq 0,
\end{align}
where $\opt{U}(\opt{x}(\tau))$ is the set of all admissible controls satisfying maximum principle (\ref{eq:maxH})--(\ref{eq:adjH}), taken at $x=\opt{x}(\tau)$.
\end{prop}
\begin{proof}See Appendix \ref{ap:limU}. \end{proof}
 
\begin{rem}\label{r:maxg} This case is equivalent to imposing mixed constraints which borders coincide with some admissible
pairs $(u(\cdot),x(\cdot))$ satisfying maximum principle (\ref{eq:maxH})--(\ref{eq:adjH}).  
\end{rem}

When integrand $g$ does not depend on state variable $x$,  like in Ramsey model in Example \ref{ex:Ramsey}, then we can formulate the following corollary.
\begin{cor} 
\label{c:limU1}
Let state variable $x$ be one-dimensional, the state equation autonomous, and integrand $g$ independent of state variable. 
If control $\opt{u}$ is WOO, then for almost all $\tau\in[t_0,\infty)$ the integrand of the objective functional is maximal on admissible control values:
\begin{equation}\label{eq:gmax}
    g(\cdot,\opt{u}(\tau),\tau) = \max_u\{g(\cdot,u,\tau)\,|\,(u,\opt{x}(\tau))\in G\}.
\end{equation}
\end{cor}
\begin{proof}Assumption~\ref{a:lim0} trivially holds and limit (\ref{eq:MJx}) is finite valued, due to $\opt{J}_x(\tau,T)\equiv 0$. Necessary condition (\ref{eq:gmax}) follows from Proposition~\ref{p:limU1}. 
\end{proof}

\section{Examples}
\begin{exmp}[centralized Ramsey model without discounting]\label{ex:Ramsey}
We maximize aggregated constant relative risk aversion utility
$$\int_0^{\infty}\frac{c(t)^{1-\theta}}{1-\theta}\dd t\rightarrow\max_{c \geq 0},\quad \text{s.t.}\quad \dot k(t) = k(t)^{\alpha} -\delta k(t) - c(t),\quad k(t) \geq 0,$$
where $k(0)=k_0 > 0$, $\theta\neq 1$, $\theta > 0$, and $\alpha\in(0,1)$. We write Hamiltonian in normal case ($\lambda = 1$): 
$$\Ham(k,c,t,\psi,1) = \frac{c^{1-\theta}}{1-\theta} + \psi \left(k^{\alpha} -\delta k - c\right),$$
since the abnormal case ($\lambda = 0$) would lead to $\psi = 0$ and thus impossible due to $(\lambda,\psi)\neq 0$.
Maximum condition $c(t)^{-\theta} = \psi(t)$ and 
the adjoint equation $- \dot{\psi}(t) = \left(\alpha k(t)^{\alpha - 1}-\delta\right) \psi(t)$ result in the Euler equation
$$\frac{\dot c(t)}{c(t)} = \frac{\alpha k(t)^{\alpha - 1}-\delta}{\theta}.$$
It follows from the Euler equation and the state equation, that any admissible pair $(k,c)$, that does not violate constraint $k(t)\geq 0$, converges either to steady state $(k_*,c_*)$, where $k_* = \left(\delta/\alpha\right)^{\frac{1}{\alpha-1}}$ and $c_* = \left(1-\alpha\right)\left(\delta/\alpha\right)^{\frac{\alpha}{\alpha-1}} > 0$, or to $(\delta^{\frac{1}{\alpha-1}},0)$, where $k_* < \delta^{\frac{1}{\alpha-1}}$.
\begin{figure}[h]
\setlength{\unitlength}{0.009\textwidth}
\centering
\begin{picture}(100,80)
\put(0,0){\includegraphics[width=100\unitlength]{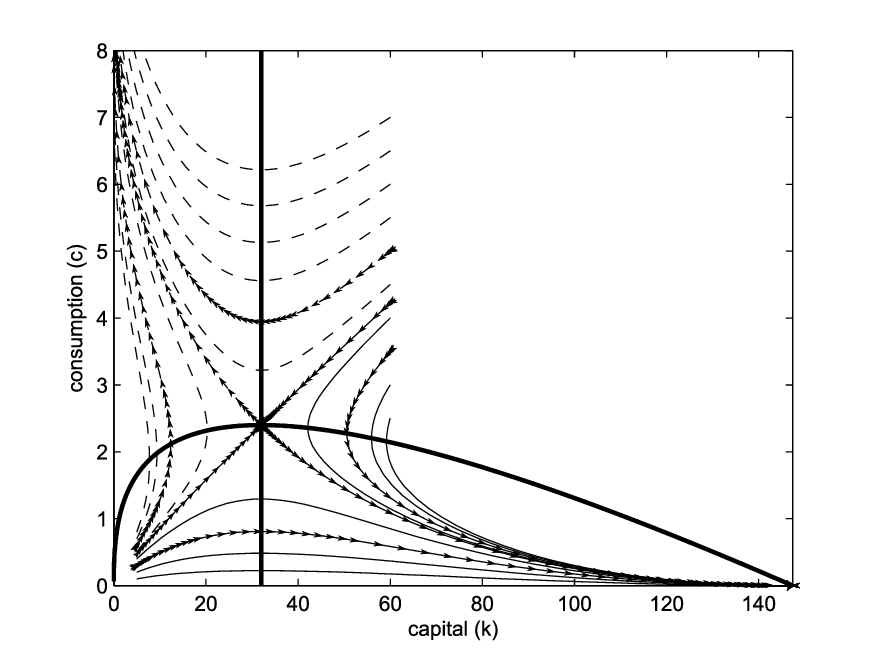}}
\put(29,4){$k_*$}
\put(31,65){$\dot c = 0$}
\put(65,20){$\dot k = 0$}
\end{picture}
\caption{Phase diagram for $\alpha = 0.4$, $\delta = 0.05$, and $\theta = 0.5$. Bold lines denote stationary points, where $\dot k =0$ and $\dot c = 0$. Solid lines are the trajectories satisfying optimality conditions in the form of Euler equation. Dashed lines are those trajectories which eventually violate nonegativity of capital $k$.}\label{f:Ramsey} 
\end{figure}
Necessary conditions (\ref{eq:gmax}) formulated in Corollary~\ref{c:limU1} %Remark~\ref{r:maxg} 
single out the optimal pair converging to $(k_*,c_*)$. 
\begin{equation}\label{eq:cmax}
    \frac{\opt{c}(t)^{1-\theta}}{1-\theta} \geq \frac{c^{1-\theta}}{1-\theta},\quad \text{ for all } c \text{ such that }  (c,k(t))\in G,
\end{equation}
where $G$ is the set of the trajectories governed by the state and Euler equations, not violating conditions $c(t) \geq 0$ and $k(t) > 0$, see solid lines in Figure~\ref{f:Ramsey}.
\end{exmp}

Let us check transversality conditions (\ref{eq:tcPSI})--(\ref{eq:tcKAV}):
\begin{enumerate}
\item Adjoint variable $\psi(t)=c(t)^{-\theta} \rightarrow c_{*}^{-\theta} > 0 $ does not converge to zero as $t\rightarrow\infty$.
\item The product $\langle\psi(t),\opt{k}(t)\rangle \rightarrow k_{*} c_{*}^{-\theta} > 0$ does not converge to zero. 
\item  Hamiltonian $\Ham(\opt{x}(t),\opt{u}(t),t,\psi(t),1) \rightarrow \frac{c_{*}^{1-\theta}}{1-\theta} >0$ does not converge to zero, because condition (\ref{eq:tcM}) requires convergence of the objective functional.
\item Each admissible pair of constants $(k,c)$, such that $k< k_*$ and $c = k^{\alpha} -\delta k$ would violate condition (\ref{eq:tcShell}). Notice that condition (\ref{eq:tcShell}) would be satisfied if we considered only such admissible trajectories that satisfy the maximum principle, i.e. the Euler equation.
\item Vector-function $\opt{J}_x$, obtained in (\ref{eq:exoscJx}), does not converge as $T\rightarrow\infty$, because condition (\ref{eq:tcKAV}) requires Assumption~\ref{a:x2}.
\end{enumerate}

The following example of an autonomous monotonic problem illustrates Proposition~\ref{p:lima}.
\begin{exmp}\label{ex:lima}
$$\int_0^{\infty}e^{-\rho t} x(t)\dd t\rightarrow\max_{u},\quad \text{s.t.}\quad \dot x(t) = u(t),\quad x(0)=0,\quad u(t)\in[0,1]$$
where obvious optimal control is $u\equiv 1$.

For $\rho > 0$ we have from Proposition~\ref{p:lima} the adjoint variable $\psi(t) = a_0 + \lambda e^{-\rho t}/\rho$ in normal case, where $\lambda > 0$. Overtaking optimal control $u\equiv 1$ provides maximum to the Hamiltonian
$\Ham(x,u,t,\psi,\lambda) = \lambda e^{-\rho t} x + \psi\, u$ for all $t\geq 0$ if, and only if, $a_0 \geq 0$. Thus, $\opt{\psi}(t) = \lambda e^{-\rho t}/\rho$ resulting from (\ref{eq:psi0}) is also valid. Notice that in the abnormal case of $\lambda = 0$ the adjoint is strictly positive, $\psi \equiv a_0 > 0$, since $(\lambda,\psi_0) \neq 0$.

For $\rho = 0$ maximum principle holds only in the abnormal form, $\psi \equiv a_0 > 0$, otherwise ($\lambda > 0$) maximum condition (\ref{eq:maxH}) would be violated for any solutions $\psi(t) = a_0 - \lambda t$. %Control is overtaking optimal.
In this case all  conditions (\ref{eq:tcPSI})--(\ref{eq:tcKAV}) are not satisfied.

All aforementioned correct adjoint solutions in this example, including the abnormal case, satisfy our new optimality condition (\ref{eq:limUoo}) that reads as $\psi_0 \geq 0$, since $K\equiv 1$ and $u \leq 1$, even though requirement (\ref{eq:MJx}) of Proposition~\ref{p:limU} is not fulfilled.

Let us check transversality conditions (\ref{eq:tcPSI})--(\ref{eq:tcKAV}) the case of $\rho = 0$, when maximum principle holds only with $\lambda = 0$:
\begin{enumerate}
\item Adjoint vector $\psi(t) \equiv a_0 > 0$ does not converge to zero as $t\rightarrow\infty$.
\item Due to (\ref{eq:oscilx}) the scalar product $\langle\psi(t),\opt{x}(t)\rangle = a_0 t$ does not converge to zero. 
\item Hamiltonian does not converge to zero.
\item Condition (\ref{eq:tcShell}) is not satisfied since  $\langle\psi(t),\opt{x}(t)-x(t)\rangle = -a_0 t < 0$ for $x(t)\equiv 0$. Notice that condition (\ref{eq:tcShell}) would be trivially satisfied, $\langle\psi(t),\opt{x}(t)-x(t)\rangle \equiv 0$ , if we considered only such admissible trajectories that satisfy the maximum principle, i.e. $x(t) = a_0 t$.
\item Vector-function $\opt{J}_x(\tau,T)=T-\tau$ does not converge as $T\rightarrow\infty$.
\end{enumerate}

\end{exmp}

\begin{exmp}[{\cite[Example 1.2]{Carlson1991infinite}}, if $b=0$]\label{ex:oscil}
Let us maximize the following integral
$$\max_u\int_0^{\infty} \left(x_2(t) + b u(t)\right)\dd t, $$
where $b \geq 0$, subject to the system describing a linear oscillator
\begin{align}
	\dot x_1(t) & = x_2(t),  & x_1(0) = 0,\\
	\dot x_2(t) & = u(t)  - x_1(t), & x_2(0) = 0,
\end{align}
with bounded control $u(t)\in[-1, 1]$. 
We have the state-transition matrix
$$K(t,\tau) = \left(\exp\left( 
\begin{array}{cc}
	0 & 1\\
	-1 & 0\\
\end{array}\right)\right)^{t-\tau} = \left( 
\begin{array}{cc}
	\cos(t-\tau) & \sin(t-\tau)\\
	-\sin(t-\tau) & \cos(t-\tau)\\
\end{array}\right)
$$
and the state trajectory
$$x(t) = \int\limits_{0}^{t} K(t,s) \left(\begin{array}{c}
	0\\
	u(s)\\
\end{array}\right) \dd s = \int\limits_{0}^{t} u(s)\left(\begin{array}{c}
	\sin(t-s)\\
	\cos(t-s)\\
\end{array}\right) \dd s. $$
Functional (\ref{eq:JT}) reads as follows
\begin{align}
J(u(\cdot),0,0,T) & = x_1(T) + \int_0^{T} b u(t) \dd t = \int_0^{T} \left(\sin(T-t) + b\right) u(t)\dd t.
\end{align}
For $b \geq 1$ control $\opt{u} \equiv 1$ is OO.
For $b\in [0,1)$ control $\opt{u} \equiv 1$ is WOO rather than OO, see proof in Appendix~\ref{ap:exlima}. This control maximizes the Hamiltonian 
\begin{equation}\label{eq:oscilH}
\Ham(x,u,t,\psi,\lambda) = \lambda \left(x_2 + b u\right) + \psi_1 x_2 + \psi_2 \left(u - x_1\right),
\end{equation}
only in normal case ($\lambda = 1$) and only for solutions 
\begin{equation}\label{eq:oscilpsi}
	\psi_1(t) = -r\cos(t+\phi) - 1,\quad
	\psi_2(t) = r\sin(t+\phi),
\end{equation}
of the adjoint system
\begin{align}
	\dot \psi_1(t) & = \psi_2(t),\\
	\dot \psi_2(t) & =  - \psi_1(t) - \lambda,
\end{align}
where $\abs{r}\leq b$ and $\phi$ is any phase shift. Indeed, in the abnormal case ($\lambda = 0$) 
control $\opt{u} \equiv 1$ would maximize Hamiltonian (\ref{eq:oscilH}) only for $\psi_1 \equiv \psi_2 \equiv 0$, that contradicted $(\lambda,\psi_0)\neq 0$.  Optimal trajectory reads as 
\begin{equation}\label{eq:oscilx}
	\opt{x}_1(t) = 1-\cos(t),\quad
	\opt{x}_2(t) =  \sin(t).
\end{equation}
 Vector-function $\opt{J}_x$, defined in (\ref{eq:Jx}), is oscillating in $T$:
\begin{align}
\opt{J}_x(\tau,T) = & \int\limits_{\tau}^{T} \left(\begin{array}{cc}
	\cos(t-\tau) & -\sin(t-\tau)\\
	\sin(t-\tau) & \cos(t-\tau)\\
\end{array}\right) \left(\begin{array}{c}
	0\\
	1\\
\end{array}\right) \dd t\nonumber\\ 
=  &\left(\begin{array}{c}
	\cos(T-\tau)-1\\
	\sin(T-\tau)\\
\end{array}\right).\label{eq:exoscJx}
\end{align}
We denote $\Delta\opt{\Ham}(u,\tau,T) := \Ham(\opt{x}(\tau),u,\tau,\opt{J}_x(\tau,T),1) - \Ham(\opt{x}(\tau),\opt{u}(\tau),\tau,\opt{J}_x(\tau,T),1)$
Sinse
$\Delta\opt{\Ham}(u,\tau,T) = \left(u-1\right) \left(\sin(T-\tau) + b\right)$,
 condition (\ref{eq:limUwoo}) for $\opt{u} \equiv 1$ being WOO takes the form $\left(u-1\right) \left(1 + b\right) \leq 0$ for all $u \in [-1,1]$. Condition (\ref{eq:limUoo}) for $\opt{u} \equiv 1$ being OO reads as $\left(u-1\right) \left(-1 + b\right) \leq 0$ for all $u \in [-1,1]$.
Let us check transversality conditions (\ref{eq:tcPSI})--(\ref{eq:tcKAV}):
\begin{enumerate}
\item Adjoint vector $\psi(t)$, obtained in (\ref{eq:oscilpsi}), does not converge to zero as $t\rightarrow\infty$.
\item Due to (\ref{eq:oscilx}) the scalar product 
$$\langle\psi(t),\opt{x}(t)\rangle = \cos(t) - r \cos(t+\phi) + r \cos(\phi) -1 \rightarrow 0$$ 
as $t\rightarrow\infty$ only for  
$\psi_1(t) = -\cos(t)-1$ and $\psi_2(t) = \sin(t)$ which belong to the correct adjoints in (\ref{eq:oscilpsi}) only if $b\geq 1$, when $\opt{u} \equiv 1$ is OO. 
\item  Due to (\ref{eq:oscilx}) and (\ref{eq:oscilpsi}) Hamiltonian $\Ham(\opt{x}(t),\opt{u}(t),t,\psi(t),1) =  r\sin(\phi) + b$
can converge to zero only if $r\sin(\phi) = -b$. Since $\abs{r}\leq b$ we have the only such adjoint solution:  $\psi_1(t) = -b\sin(t) - 1$ and $\psi_2(t) = -b\cos(t)$.
\item Condition (\ref{eq:tcShell}) seems to be satisfied for all correct adjoint variables (\ref{eq:oscilpsi}), such that $\abs{r}\leq b$.
\item Vector-function $\opt{J}_x$, obtained in (\ref{eq:exoscJx}), does not converge as $T\rightarrow\infty$.
\end{enumerate}

\end{exmp}

\appendix

\section{Proofs of Propositions \ref{p:limU} and \ref{p:limU1}}\label{ap:limU}
\begin{proof}[Proof of Proposition \ref{p:limU}]
\textbf{Needle variation} at
time $\tau$ can be defined as 
\begin{equation}
    u_{\alpha}(t) := \left\{\begin{array}{l@{,\quad}l}
                       \opt{u}(t) & t \notin (\tau-\alpha,\tau] \\
                       u & t \in (\tau-\alpha,\tau] \\
                     \end{array}\right.,
\end{equation}
where $u\in U$ is some constant. It is implied that $u =
u(\tau)$. Assumption~\ref{a:x2} guarantees that for sufficiently small $\alpha > 0$ control  $ u_{\alpha}$ is admissible, i.e. corresponding trajectory $\var{x}(t)\in X$ for all $t>t_0$.\\
\textbf{The corresponding increment in the value of the functional} can be written as follows:
\begin{eqnarray*}
  \Delta J_{\alpha}(T) &:=&	J(\var{u}(\cdot),x_0,t_0,T) - J(\opt{u}(\cdot),x_0,t_0,T)\nonumber\\
 &=&\int_{\tau - \alpha}^{T}\left(g(\var{x}(t),\opt{u}(t),t)-g(\opt{x}(t),\opt{u}(t),t)\right)\dd t \nonumber\\
&=&J(\opt{u}(\cdot),\var{x}(\tau),\tau,T) - J(\opt{u}(\cdot),\opt{x}(\tau),\tau,T) \nonumber\\
&&+\int_{\tau-\alpha}^{\tau}\left(g(\var{x}(t),u,t)-g(\opt{x}(t),\opt{u}(t),t)\right)\dd
t,
\end{eqnarray*}
where $\var{x}$ is the trajectory corresponding to control $\var{u}$.
Then, due to Assumption~\ref{a:lim0}, we have that for all $\eps > 0$ there exists $\alpha(\eps) > 0$ such that for all $T\geq\tau$ the following inequality holds
\begin{align}
  \frac{\Delta J_{\alpha}(T)}{\alpha}\! \geq & -\eps + \left\langle\opt{J}_x(\tau,T), \zeta \right\rangle \nonumber\\     
	& + \frac{1}{\alpha}\!\int_{\tau-\alpha}^{\tau}\left(g(\var{x}(t),u,t)-g(\opt{x}(t),\opt{u}(t),t)\right)\dd t.\label{eq:DJT}
\end{align}
%\textbf{Approximate solution:} 
We take the vector $\zeta$ in (\ref{eq:DJT}) as 
$$\var{\zeta}(\tau) = \frac{\var{x}(\tau) - \opt{x}(\tau)}{\alpha}.$$
Due to the differentiability of
function $f$ with respect to $x$ we have from differential equation (\ref{eq:dx}) the following limit
\begin{equation}\label{eq:xt}
\lim_{\alpha\rightarrow 0} \var{\zeta}(\tau) = y(\tau),
\end{equation}
where $y$ is the solution (\ref{eq:yK}) of the linearized system
(\ref{eq:dy}), $y(t) = K(t,\tau)\,y(\tau)$, with the following initial condition at
$t=\tau$:
\begin{equation}\label{eq:ytau}
y(\tau) = f(\opt{x}(\tau),u,\tau) -
f(\opt{x}(\tau),\opt{u}(\tau),\tau).
\end{equation} 
Taking into account expression (\ref{eq:xt}), we have the
following approximation of the last term in (\ref{eq:DJT}), due to continuity of $u$ and $\opt{u}$ at $\tau$ and continuity of $g$ w.r.t. $(x, u)$:
\begin{align*}
&\frac{1}{\alpha}\!\int\limits_{\tau-\alpha}^{\tau}\left(g(\var{x}(t),u,t)-g(\opt{x}(t),\opt{u}(t),t)\right)\dd
t \\ & =  g(\opt{x}(\tau),u,\tau)-g(\opt{x}(\tau),\opt{u}(\tau),\tau) +
O(\alpha),
\end{align*}
where $\lim_{\alpha\rightarrow 0} O(\alpha) = 0$. Hence, inequality (\ref{eq:DJT}) takes the form
\begin{align*}
  \frac{\Delta J_{\alpha}(T)}{\alpha} \geq & - \eps+ \left\langle\opt{J}_x(\tau,T), \var{\zeta}(\tau)\right\rangle
  + g(\opt{x}(\tau),u,\tau)-g(\opt{x}(\tau),\opt{u}(\tau),\tau) + O(\alpha).
\end{align*}
Limits (\ref{eq:MJx}) and (\ref{eq:xt}) imply %the inequality for the limit uniform in $T\in [\tau,\infty)$
that for all $\eps > 0$ there exist $T_1(\eps)$ and $\alpha_1(\eps) > 0$ such that for all $T\geq T_1(\eps)$ and $\alpha\in(0,\alpha_1(\eps)]$ holds
$$\left\langle
\opt{J}_x(\tau,T), y(\tau)-\var{\zeta}(\tau)\right\rangle \leq \abs{\opt{J}_x(\tau,T)}\abs{y(\tau)-\var{\zeta}(\tau)} < \eps,$$
that can be written as
$$\left\langle\opt{J}_x(\tau,T), y(\tau)\right\rangle < \left\langle\opt{J}_x(\tau,T), \var{\zeta}(\tau)\right\rangle + \eps.$$
Hence, we have inequality (\ref{eq:DJT}) in the form
\begin{align*}
  \frac{\Delta J_{\alpha}(T)}{\alpha} \geq & - 2\eps+ \left\langle\opt{J}_x(\tau,T), y(\tau)\right\rangle\\
  &+ g(\opt{x}(\tau),u(\tau),\tau)-g(\opt{x}(\tau),\opt{u}(\tau),\tau) + O(\alpha).
\end{align*}
\textbf{Definition \ref{def:LWOO} of WOO} means, that for all $\eps_2>0$ and $T_2>t_0$ there exists $T'(\eps_2)\geq T_2$ such that holds $\Delta J_{\alpha}(T') \leq \eps_2$.
Let us take $T_2 \geq T_1$ and $\eps_2 = \alpha\,\eps$. Then inequality $\Delta J_{\alpha}(T') \leq \alpha\,\eps$ results in
\begin{align}\label{eq:3eps}
  3\eps \geq &+ \left\langle\opt{J}_x(\tau,T'), y(\tau)\right\rangle
  + g(\opt{x}(\tau),u,\tau)-g(\opt{x}(\tau),\opt{u}(\tau),\tau) + O(\alpha).
\end{align}
Suppose that (\ref{eq:limUwoo}) is violated, i.e. there exist $\eps > 0$ and $T\geq t_0$ such that for all $T'\geq T$
\begin{align*}
  & \left\langle \opt{J}_x(\tau,T'), y(\tau)\right\rangle 
 + g(\opt{x}(\tau),u,\tau)-g(\opt{x}(\tau),\opt{u}(\tau),\tau) \geq 4\eps,
\end{align*}
Then we have contradiction with (\ref{eq:3eps}) taking $\alpha$ small enough, i.e (\ref{eq:limUwoo}) should hold. 

Similar calculations can be done for OO condition (\ref{eq:limUoo}).
\end{proof}

\begin{proof}[Proof of Proposition \ref{p:limU1}]
If set $\opt{U}(\opt{x}(\tau))$ is a singleton, then $\opt{u}(\tau)= \opt{U}(\opt{x}(\tau))$.
If set $\opt{U}(\opt{x}(\tau))$ contains many values we can construct an admissible 
\textbf{needle variation} at
time $\tau$ as 
\begin{equation}
    u_{\alpha}(t) := \left\{\begin{array}{l@{,\quad}l}
                       \opt{u}(t) & t \notin (\tau-\alpha,\tau] \\
                       u(t) & t \in (\tau-\alpha,\tau] \\
                     \end{array}\right.,
\end{equation} 
where $u(t)\in \opt{U}(\opt{x}(t))$ for all $t \in (\tau-\alpha,\tau]$. Admissibility of control $u_{\alpha}$ follows from convexity of $X$. Indeed, one-dimensional autonomous state equation will have an admissible solution for any subsequent combination of admissible controls. Thus for all $\alpha > 0$ corresponding trajectory $\var{x}(t)\in X$ for all $t>t_0$.

The rest of the proof is the same as that of Proposition \ref{p:limU}
\end{proof}

\section{Proof of Proposition \ref{p:lima}}\label{ap:lima}
First, we prove the following  Lemma. 
\begin{lem}\label{l:psiT}
Adjoint equation (\ref{eq:adjH}) can be written with the use of (\ref{eq:Jx}) as
\begin{align}\label{eq:psiT}
  \psi(\tau) & = K^*(T,\tau)\,\psi(T) + \lambda\,\opt{J}_x(\tau,T), \quad \psi(t_0) = \psi_0.
\end{align}
\end{lem}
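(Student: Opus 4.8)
The plan is to recognize \eqref{eq:psiT} as the variation-of-constants (Cauchy) representation of the linear adjoint equation \eqref{eq:adjH}, and to prove it by exploiting the duality between the linearized state equation \eqref{eq:dy} and the adjoint system. Writing $A(t) := \frac{\partial f}{\partial x}(\opt{x}(t),\opt{u}(t),t)$ and $b(t) := \frac{\partial g}{\partial x}(\opt{x}(t),\opt{u}(t),t)$, differentiation of \eqref{eq:H} gives $\frac{\partial\Ham}{\partial x} = \lambda\,b(t) + A^*(t)\,\psi$, so that \eqref{eq:adjH} reads $\dot\psi(t) = -A^*(t)\,\psi(t) - \lambda\,b(t)$, while \eqref{eq:dy} reads $\dot y(t) = A(t)\,y(t)$. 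The goal is to show that both sides of \eqref{eq:psiT} coincide for every $\tau \leq T$.

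First I would fix $\tau$, take an arbitrary vector $y(\tau)$, and propagate it forward by the linearized flow, $y(t) = K(t,\tau)\,y(\tau)$, as in \eqref{eq:yK}. The key step is to compute the derivative of the pairing $\langle y(t),\psi(t)\rangle$: using the two ODEs above, $\frac{\dd}{\dd t}\langle y,\psi\rangle = \langle A y,\psi\rangle + \langle y, -A^*\psi - \lambda b\rangle$, and since $\langle A y,\psi\rangle = \langle y, A^*\psi\rangle$ the homogeneous terms cancel, leaving $\frac{\dd}{\dd t}\langle y(t),\psi(t)\rangle = -\lambda\,\langle y(t),b(t)\rangle$. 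Integrating from $\tau$ to $T$ then yields $\langle y(T),\psi(T)\rangle - \langle y(\tau),\psi(\tau)\rangle = -\lambda\int_\tau^T \langle y(t),b(t)\rangle\,\dd t$.

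Next I would substitute $y(t) = K(t,\tau)\,y(\tau)$ and shift $K$ onto the adjoint factor by its defining property $\langle K(t,\tau)\,v,w\rangle = \langle v, K^*(t,\tau)\,w\rangle$. The right-hand side becomes $-\lambda\,\langle y(\tau), \int_\tau^T K^*(t,\tau)\,b(t)\,\dd t\rangle = -\lambda\,\langle y(\tau), \opt{J}_x(\tau,T)\rangle$ by the definition \eqref{eq:Jx}, while the left-hand side becomes $\langle y(\tau), K^*(T,\tau)\,\psi(T) - \psi(\tau)\rangle$. Since $y(\tau)$ is arbitrary, the two vectors paired against it must agree, which gives $\psi(\tau) = K^*(T,\tau)\,\psi(T) + \lambda\,\opt{J}_x(\tau,T)$, i.e. \eqref{eq:psiT}; the boundary value $\psi(t_0)=\psi_0$ is simply inherited from \eqref{eq:adjH}.

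I do not expect a deep obstacle here, since the statement is essentially a bookkeeping identity. The points needing care are the transpose conventions (obtaining $A^*$ rather than $A$ in the adjoint equation, and correctly transferring $K$ to $K^*$), and the regularity justification: as $\opt{u}$ is only measurable, $A$ and $b$ are merely locally integrable, so $\psi$, $y$, and $\langle y,\psi\rangle$ are absolutely continuous and the fundamental theorem of calculus applies almost everywhere, which suffices for the integration step. As a cross-check I would verify the alternative route of differentiating the right-hand side of \eqref{eq:psiT} directly in $\tau$ (using $\frac{\partial}{\partial\tau}K^*(t,\tau) = -A^*(\tau)\,K^*(t,\tau)$ together with the Leibniz rule on \eqref{eq:Jx}), confirming that it satisfies the same linear ODE as $\psi$ with matching terminal value at $\tau = T$, so that the identity follows by uniqueness.
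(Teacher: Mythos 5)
Your proof is correct and is essentially the paper's own argument: both rest on integrating the identity $\frac{\dd}{\dd t}\bigl(K^*(t,\tau)\,\psi(t)\bigr) = -\lambda\,K^*(t,\tau)\,\frac{\partial g}{\partial x}(\opt{x}(t),\opt{u}(t),t)$ from $\tau$ to $T$. The only cosmetic difference is that you encode this cancellation by pairing $\psi$ with an arbitrary solution $y$ of the linearized system and stripping the pairing at the end, whereas the paper multiplies the adjoint equation by $K^*(t,\tau)$ directly and recognizes the total derivative.
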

\begin{proof} Since vector $y(\tau)$ in (\ref{eq:yK}) is arbitrary, from (\ref{eq:dy}) one can find the matrix derivative $$\frac{\partial K}{\partial t}(t,\tau) = \left(\frac{\partial f}{\partial x}(\opt{x}(t),\opt{u}(t),t)\right) K(t,\tau).$$ Taking the Hermitian transpose we have $$\frac{\partial K^*}{\partial t}(t,\tau) = K^*(t,\tau)\left(\frac{\partial f}{\partial x}(\opt{x}(t),\opt{u}(t),t)\right)^*.$$ Hence, if we multiply the adjoint equation (\ref{eq:adjH}) by matrix $K^*(t,\tau)$ as 
$$-K^*(t,\tau)\,\dot\psi(t) = K^*(t,\tau)\left(\frac{\partial f}{\partial x}(\opt{x}(t),\opt{u}(t),t)\right)^*\psi(t) + \lambda\,K^*(t,\tau)\,\frac{\partial g}{\partial x}(\opt{x}(t),\opt{u}(t),t),$$ 
then we have 
$$-\frac{\partial}{\partial t}\left(K^*(t,\tau)\,\psi(t)\right) = \lambda\, K^*(t,\tau)\,\frac{\partial g}{\partial x}(\opt{x}(t),\opt{u}(t),t).$$ Integration of the letter equation from $\tau$ till $T$ yields (\ref{eq:psiT}).
\end{proof}
\subsection{Main proof}
\begin{proof} 1) It follows from (\ref{eq:lima}) and (\ref{eq:psiT}) with $\lambda=1$  that
\begin{align*}
 \psi(\tau) - \lim_{T\rightarrow +\infty} \opt{J}_x(\tau,T) &  =  \lim_{T\rightarrow +\infty} K^*(T,\tau)\,\psi(T)\\
   &  = K^*(t_0,\tau) \lim_{T\rightarrow +\infty} \left(K^*(T,t_0)\,\psi(T)\right)\\
   &  = K^*(t_0,\tau)\, a_0,
\end{align*}
where we use the expression  $ K^*(T,\tau) = \left( K(T,t_0)\, K(t_0,\tau) \right)^* =   K^*(t_0,\tau)\, K^*(T,t_0)$.
Taking into account (\ref{eq:psi0}) we have $\psi(\tau)-\opt{\psi}(\tau) = K^*(t_0,\tau)\,a_0$.\\
 2) It follows from (\ref{eq:psiT}) with $\lambda = 0$, that  $\psi(\tau) = K^*(t_0,\tau)\,a_0$.
\end{proof}

\section{Proof of optimality in example~\ref{ex:lima}}\label{ap:exlima}
We show that for $b \in [0,1)$ control $u(t)\equiv 1$ is weak overtaking optimal among all controls in $[-1,1]$, see Definition~\ref{def:LWOO}, where 
\begin{align}
J(u(\cdot),0,0,T)-J(1,0,0,T) & = \int\limits_0^{T} \left(\sin(T-t) + b\right) \left(u(t)-1\right)\dd t.
\end{align}
It suffice to prove that for the function
\begin{align}
\Delta x_1(T)  := \int\limits_0^{T} \sin(T-t) \left(u(t)-1\right)\dd t
\end{align}
for all $T^{\prime}\geq 0$ there exists $T \geq T^{\prime}$ such that $\Delta x_1(T)\leq 0$. 
Assume the opposite, i.e. there exists $T^{\prime}\geq 0$ such that for all $T \geq T^{\prime}$ holds $\Delta x_1(T) > 0$.
Take the integer number $n$ such that $2\pi \left(n-1\right) > T^{\prime}$, so that $$\Delta x_1(2\pi \left(n-1\right)) = \int\limits_0^{2\pi \left(n-1\right)} \sin(t) \left(u(t)-1\right)\dd t > 0.$$
The function $\Delta x_1$ of $T=2n\pi$ can be written as
\begin{align}
\Delta x_1(2n\pi) =	&	-\int\limits_{0}^{2n\pi} \sin(t)\left(u(t)-1\right) \dd t\nonumber\\ 
= & -\Delta x_1(2\pi \left(n-1\right)) 
-\int\limits_{\left(2n-1\right)\pi}^{2n\pi} \sin(t)\left(u(t)-1\right) \dd t .
\end{align}
The last integral is not negative
$$\int\limits_{\left(2n-1\right)\pi}^{2n\pi}\sin(t)\left(u(t)-1\right) \dd t \geq 0,$$
since $u(t)\leq 1$ and $\sin(t) \leq 0$ for all $t\in [\left(2n-1\right)\pi, 2n\pi]$. Thus, recalling that $\Delta x_1(2\pi \left(n-1\right)) >0$, we have inequality $\Delta x_1(2n\pi) < 0$, which contradicts the assumption.\qed

\bibliographystyle{plain}%\bibliographystyle{apacite}
%\bibliography{biblio}

%--------------------------------------------------------------------------------------------------------------

\end{document}